\theoremstyle{definition}
\newtheorem{defi}{Definition}
\theoremstyle{plain}
\newtheorem{lemm}[defi]{Lemma}
\newtheorem{coro}[defi]{Corollary}
\newtheorem{theo}[defi]{Theorem}
\newtheorem*{theo*}{Theorem}
\newtheorem{prop}[defi]{Proposition}
\theoremstyle{remark}
\newtheorem{rema}[defi]{Remark}
\newtheorem*{rema*}{Remark}
\newcommand{\N}{\mathbb{N}}
\newcommand{\R}{\mathbb{R}}
\newcommand{\Z}{\mathbb{Z}}
\newcommand{\C}{\mathbb{C}}
\newcommand{\ph}{\varphi}
\renewcommand{\Re}{\mathrm{Re}}
\renewcommand{\Im}{\mathrm{Im}}
\newcommand{\Ha}{\mathbb{H}}
\DeclareMathOperator{\form}{\mathfrak{t}}
\DeclareMathOperator{\grad}{\mathrm{grad}}
\begin{document}
	\title{Perturbation Theory for Pseudo-Laplacians}
	\author[Initial Surname]{Marlis Balkenhol}
	\date{\today}
	\address{Mathematisches Institut der Heinrich-Heine-Universit\"at D\"usseldorf, Universit\"atsstr. 1, 40225 Germany}
	\email{Marlis.Balkenhol@hhu.de}
	\maketitle
	
	\let\thefootnote\relax
	\footnotetext{This is a preprint. It has not yet been peer-reviewed or accepted for publication, and does not contain any modifications that may be requested after submission.}
	
	\begin{abstract}
		We generalise the notion of the Pseudo-Laplacian on a hyperbolic Riemann surface with one cusp, that was studied by Lax and Phillips and Colin de~Verdi\`ere, 
		by considering a boundary condition of Robin type for the constant term instead of the classical Dirichlet condition.
		The resulting family of Pseudo-Laplacians is a holomorphic family of unbounded operators in the sense of Kato. 
		By use of holomorphic perturbation theory, 
		we study the eigenvalues and eigenvectors of this family and give a new proof for the meromorphic continuation of the Eisenstein series.  
	\end{abstract} 
	
	\bigskip

	\section{Introduction}
	
	Let~$X$ be a hyperbolic Riemann surface with one cusp at~$\infty$. $X$ can be represented by the quotient~$\Gamma\backslash\Ha$ of the hyperbolic upper half plane~$\Ha$ by the action of a Fuchsian group~$\Gamma$.
	The functions on~$X$ are called \textit{automorphic} as they are invariant under the group action of~$\Gamma$. 
	Denote by 
	\begin{equation*}
		\Delta = -y^2 (\partial_{xx} + \partial_{yy})
	\end{equation*}
	the Laplace operator on~$X$ in the sense of distributions and let~$\Delta_\Gamma$ be the self-adjoint operator constructed by the Friedrichs extension of~$\Delta$ restricted to~$\mathcal{D}(X)$. 
	
	The \textit{Eisenstein series}, defined by
	\begin{equation}\label{eisen}
		E(z,s) := \sum_{\gamma \in \Gamma_\infty\backslash\Gamma}{(\Im{(\gamma.z)})^s}
	\end{equation}
	where~$\Gamma_\infty$ is the stabiliser of the cusp~$\infty$, is an automorphic function on~$X$.
	The series in~\eqref{eisen} converges absolutely if and only if~$\Re{s}>1$, but~$E(z,s)$ can be continued meromorphically for all~$s \in \C$.
	
	In the terminology of Kubota \cite{Kubota1973}, the Eisenstein series is a 'generalised eigenfunction' of~$\Delta$ as~$\Delta E(z,s) = s(1-s) E(z,s)$  for all~$z \in \Ha$.
	$E(\, \cdot \, ,s)$ is no proper eigenvector of~$\Delta_\Gamma$, but it is connected to the continuous spectrum of~$\Delta_\Gamma$ and the (discrete) spectrum of the \textit{Pseudo-Laplace operator}~$\Delta^\eta$ studied in~\cite{cdv1},~\cite{cdv2} and~\cite{Lax1976}. 
	As a specific result, Colin de~Verdi\`ere \cite{cdv2} re-proves Selberg's theorem of the meromorphic continuation of the Eisenstein series by meromorphy of the resolvent of~$\Delta^\eta$.
	
	In this paper, we modify the construction of the Pseudo-Laplacian~$\Delta^\eta$, gaining for each~$\eta$ a new family~$(\Delta^\eta_\gamma)_{\gamma\in\hat{\C}}$ of operators with compact resolvent, and study its spectrum with holomorphic perturbation theory.
	Based on the holomorphy properties of the family~$(\Delta^\eta_\gamma)_{\gamma\in\hat{\C}}$, we give yet another proof of the meromorphic continuation theorem for the Eisenstein series. \\
	
	\noindent 
	\textbf{Acknowledgement.}
	This paper is based on the author's PhD thesis \cite{balk2022}. The author would like to thank her supervisor R\"udiger Braun for many helpful discussions.
	
	\section{Classical Pseudo Laplacians}\label{class-plo}
	
	We start with a short overview of the construction and results on the spectrum of~$\Delta^\eta$ in~\cite{cdv2}. 
	
	Let the surface~$X$ be split into two disjoint submanifolds~$X_\eta$ and~$S_\eta$ with boundary where~$X_\eta$ is compact and~$S_\eta$ is a cusp neighbourhood. 
	In the half-plane model, we may think of~$X_\eta$ containing all those~$z\in F$ with~$\Im(z) \leq \eta$ and~$S_\eta$ all those~$z\in F$ with~$\Im{z}>\eta$ after choosing a fundamental polygon~$F$ of~$X$ with a cusp at~$\infty$. In this setting, the common boundary of~$X_\eta$ and~$S_\eta$ is the line segment~$\mathcal{H}_\eta \simeq \, ]0,1[ \times \{\eta\}$ of the horocycle through~$\infty$ and~$\eta$.
	
	Due to the geometry of~$F$, there is a lower bound~$p$, depending on the group~$\Gamma$, for all admissible choices of~$\eta$ such that~$X$ can be reasonably split in the way just described.
	For our construction of~$\Delta_\gamma^\eta$, we will always assume that~$\eta > p$.

	Now let~$\mathscr{H}_\eta = \{ f \in H^1(X) \ | \ f_0|_{]\eta,\infty[}=0 \}$ where~$f_0$ is the constant term of the Fourier series~$\sum_{m\in\Z}{f_m(y) e(mx)}$, $e(mx)=e^{2\pi i mx}$, of~$f$.
	Note that the condition~$f \in H^1(X)$ in this definition implies that~$f_0 \in C(]p,\infty[)$ with~$f_0(\eta)=0$.
	
	The Pseudo-Laplacian~$\Delta^\eta$ is defined as the Friedrichs extension of the distributional Laplacian~$\Delta$ restricted to~$\mathscr{H}_\eta$. $\Delta^\eta$ is a positive, self-adjoint operator in~$L^2_\eta = \{ f \in L^2(X) \ | \ f_0|_{]\eta,\infty[}=0 \}$
	with compact resolvent 
	(\cite{cdv2}, Theorem~2). \\
	
	\noindent 
	Let~$L^2_0$ be the subspace of~$L^2(X)$ consisting of all~$f$ with trivial constant term~$f_0=0$, and~$\Theta$ the orthogonal complement of~$L^2_0$ in~$L^2(X)$ both of which are~$\Delta$-invariant.
	The classical Laplacian~$\Delta_\Gamma$ has a finite or infinite discrete spectrum in~$L^2_0$, the corresponding eigenfunctions are called cusp forms.
	The spectrum of~$\Delta_\Gamma$ in~$\Theta$ consists of a continuous part in~$[\frac{1}{4},\infty[$ and finitely many eigenvalues in~$[0,\frac{1}{4}[$, arising from the poles of~$E(\, \cdot \, ,s)$ in~$]\frac{1}{2},1]$ (\cite{Kubota1973}, Theorem~5.2.4). 
	
	As~$L^2_0 \subset L^2_\eta$, we define~$\Theta_\eta$ as the orthogonal complement of~$L^2_0$ in~$L^2_\eta$, which is also~$\Delta$-invariant. 
	According to this decomposition, the spectrum of~$\Delta^\eta$ may be split into two parts~$\sigma_1: = \sigma(\Delta^\eta|_{L^2_0})$ and~$\sigma_2(\Delta^\eta):=\sigma(\Delta^\eta|_{\Theta_\eta})$ which are not necessarily disjoint.
	We call the eigenvalues associated with a cusp form of \textit{type~(I)} and all other eigenvalues of \textit{type~(II)}. 
	
	The type-(II) eigenvalues of~$\Delta^\eta$ form an unbounded sequence~$0 < \mu_1 < \mu_2 <  \ldots$ where the~$\mu_j=s_j (1-s_j)$ are determined by those~$s_j$ for which the Eisenstein series fulfils the smoothness condition
	\begin{equation}\label{smooth-at-eta}
		E_0(\eta,s) = 0
	\end{equation} 
	where~$E_0(y,s) =  y^s + \ph(s) y^{1-s}$.
	If~$s_j\neq\frac{1}{2}$, the eigenfunction is given by the \textit{truncated Eisenstein series}
	\begin{equation*}
		E^\eta(z,s_j):= \begin{cases}
			E(z,s_j), & \Im{z} \leq \eta, \\
			E(z,s_j) - E_0(y,s_j), & \Im{z} >\eta,
		\end{cases}
	\end{equation*}
	that is gained by 'cutting off' the constant term at~$\eta$ and is an element of~$L^2_\eta$.
	In the special case~$s_j=\frac{1}{2}$, it may happen~$E(\, \cdot \, ,s)$ vanishes everywhere on~$\Ha$. In this case, an eigenfunction is given by~$\partial_s E^\eta(\, \cdot \, , s)|_{s=\nicefrac{1}{2}}$ if and only if~$\partial_s E_0(\eta,s)|_{s=\nicefrac{1}{2}} =0$ (\cite{cdv2}, Theorem~5).

	\section{Robin Pseudo Laplacians}
	
	We interpret the smoothness condition \eqref{smooth-at-eta} of~$\Delta^\eta$ as a Dirichlet boundary condition for the constant term~$v_0$ of any~$v \in L^2_\eta$ on the horocycle $\mathcal{H}_\eta$. Now, while Colin de~Verdi\`ere considers the dependence of~$\Delta^\eta$ on~$\eta$, we fix one~$\eta>p$, and consider a more general boundary condition of Robin type
	\begin{equation}\label{robin}
		v_0'(\eta) = -\gamma v_0(\eta), \qquad \gamma\in\C.
	\end{equation}
	
	Let $A_1 = \{ f \in L^2_\eta \, | \, \exists \, C_f \in \C: \grad f - C_f \cdot \delta_{\eta,0} \in L^2_\eta \}$ where~$\grad$ is the distributional gradient on~$X$ and~$\delta_{\eta,0}$ is the distribution defined by~$\left<\delta_{\eta,0},\ph\right> = \ph_0(\eta)$. On~$A_1$, we define a sesquilinear form by
	\begin{equation*}
		\form_\gamma(u,v) = (u,v)_{A} + \gamma \cdot u_0(\eta) \bar{v}_0(\eta), \qquad u,v \in A_1,
	\end{equation*}
	where~$(u,v)_A = (u,v)_{L^2_\eta} + (\grad u - C_{u} \delta_{\eta,0}, \grad v - C_v \delta_{\eta,0})_{L^2_\eta}$.
	Using Ehrling's lemma, one sees that the form~$\form_\gamma$ is sectorial for every~$\gamma \in \C$. 
	The Representation Theorem~VII-2.1 in \cite{Kato1976} provides that there is a unique m-sectorial operator~$T_\gamma$ with domain~$D(T_\gamma)$ dense in~$A_1$ such that~$\form_\gamma(u,v) = (T_\gamma u,v)$ for all~$u\in D(T_\gamma)$, $v \in A_1$. 
	
	\begin{defi}
		The \textit{Robin Pseudo Laplacian}~$\Delta_\gamma$ is defined by
		\begin{equation*}
			\Delta_\gamma := T_\gamma - 1.
		\end{equation*}
	\end{defi}
	As~$\eta$ is usually fixed, the dependence of~$\Delta_\gamma$ on~$\eta$ is expressed by writing~$\Delta^\eta_\gamma$ only when it is important.
	
	\begin{theo} $\Delta_\gamma$ has the following properties:
		\begin{enumerate}[(i)]
			\item For every~$\gamma\in\C$, $\Delta_\gamma$ has compact resolvent.
			\item $\Delta_\gamma$ is self-adjoint if and only if~$\gamma\in\R$. 
		\end{enumerate}
	\end{theo}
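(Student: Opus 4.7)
The plan is to base both statements on a structural decomposition of the form domain $A_1$. Fix a smooth cutoff $\psi=\psi(y)$ supported in $[\eta/2,\eta]$ with $\psi(\eta)=1$, extended by zero to $X$; then $\psi\in A_1$ with $\psi_0(\eta)=1$, since its constant Fourier mode has exactly one jump at $y=\eta$, producing a single $\delta_{\eta,0}$ contribution in $\grad\psi$. For any $f\in A_1$, the constant mode $f_0$ is $H^1$ on the open strip $\,]p,\eta[\,$ (its distributional derivative differs from an $L^2$ function only by a $\delta_{\eta,0}$ concentrated at $y=\eta$), so the one-dimensional Sobolev trace $f\mapsto f_0(\eta):=f_0(\eta^-)$ is continuous on $A_1$. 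This yields a topological direct sum
\begin{equation*}
	A_1 = \mathscr{H}_\eta \oplus \C\cdot\psi, \qquad f = \bigl(f-f_0(\eta)\,\psi\bigr) + f_0(\eta)\,\psi,
\end{equation*}
the summand $f-f_0(\eta)\psi$ lying in $\mathscr{H}_\eta$ because its constant mode is continuous across $\eta$ and vanishes above.

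For part~(i), this decomposition reduces the compact embedding $A_1\hookrightarrow L^2_\eta$ to Colin de~Verdi\`ere's compact embedding $\mathscr{H}_\eta\hookrightarrow L^2_\eta$ (\cite{cdv2}, Theorem~2), since extending by a finite-dimensional subspace preserves compactness. Because $\mathfrak{t}_\gamma+k$ is coercive on $A_1$ for sufficiently large $k>0$, the resolvent $(T_\gamma+k)^{-1}\colon L^2_\eta\to A_1$ is bounded; composing with the compact embedding into $L^2_\eta$ shows $(T_\gamma+k)^{-1}$ is compact, so $T_\gamma$ and hence $\Delta_\gamma=T_\gamma-1$ has compact resolvent.

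For part~(ii), a direct computation shows $\mathfrak{t}_\gamma(u,v) - \overline{\mathfrak{t}_\gamma(v,u)} = (\gamma-\bar\gamma)\,u_0(\eta)\overline{v_0(\eta)}$, since $(\cdot,\cdot)_A$ is hermitian. If $\gamma\in\R$, the form is symmetric, closed, and semi-bounded, and Kato's representation theorem yields a self-adjoint $T_\gamma$. Conversely, if $T_\gamma$ is self-adjoint, then for $u,v\in D(T_\gamma)$
\begin{equation*}
	\mathfrak{t}_\gamma(u,v) = (T_\gamma u,v) = (u,T_\gamma v) = \overline{\mathfrak{t}_\gamma(v,u)};
\end{equation*}
density of $D(T_\gamma)$ in $A_1$ (a form core) together with continuity of $\mathfrak{t}_\gamma$ on $A_1\times A_1$ extends this identity to all of $A_1$, and testing at $u=v=\psi$, where $\psi_0(\eta)=1$, forces $\gamma=\bar\gamma$. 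The main technical obstacle is establishing continuity of the trace $f\mapsto f_0(\eta)$ in the $A_1$-norm, which underpins the whole splitting as well as the final testing step; this requires carefully extracting $H^1$-regularity of $f_0$ on $\,]p,\eta[\,$ from the distributional defect condition $\grad f-C_f\delta_{\eta,0}\in L^2_\eta$, and then invoking the standard $H^1(\,]p,\eta[\,)\hookrightarrow\C$ trace estimate.
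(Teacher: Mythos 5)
Your proof is correct. Part~(ii) is essentially the paper's own argument made explicit: the paper merely remarks that self-adjointness ``follows from the symmetry of~$\form_\gamma$ with respect to~$\gamma$'', and your identity $\form_\gamma(u,v)-\overline{\form_\gamma(v,u)}=(\gamma-\bar\gamma)\,u_0(\eta)\overline{v_0(\eta)}$ together with the density-and-testing step supplies both directions. For part~(i) you take a genuinely different route: the paper re-runs the Rellich-type compactness argument of \cite{Lax1976}, Lemma~8.7, ``analogously'' on the new form domain, whereas you avoid repeating it by exhibiting the codimension-one splitting $A_1=\mathscr{H}_\eta\oplus\C\cdot\psi$ and importing the known compact embedding $\mathscr{H}_\eta\hookrightarrow L^2_\eta$ from \cite{cdv2}, Theorem~2; a finite-dimensional extension preserves compactness, and the resolvent then factors through this embedding in the standard way. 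Your reduction buys economy and makes transparent exactly how $A_1$ differs from the classical form domain --- one transverse direction detected by the trace $f\mapsto f_0(\eta^-)$, which is the same quantity that makes the Robin term a relatively form-bounded perturbation --- while the paper's version is more self-contained but conceals this structure. The only point you must not leave implicit is the continuity of that trace on $A_1$ (you flag it yourself): it underlies the topological splitting, the testing step in~(ii), and the Ehrling-type estimate giving sectoriality of $\form_\gamma$; but since the defect condition $\grad f-C_f\delta_{\eta,0}\in L^2_\eta$ gives $f_0\in H^1(\,]p',\eta[\,)$ with norm controlled by $\|f\|_{A}$, this is the standard one-dimensional trace bound and poses no real obstacle.
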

	
	\begin{proof}
		The proof of~(i) is analogous to the proof for~$\Delta^\eta$ which goes back to \cite{Lax1976}, Lemma~8.7. The second property follows from the symmetry of~$\form_\gamma$ with respect to~$\gamma$.
	\end{proof}
	
	\section{Analyticity}
	
	Comparing the Robin boundary condition with the original smoothness condition, it is easily seen that, from the new point of view,~$\Delta^\eta$ corresponds to the point at infinity if we allow~$\gamma\in\hat{\C}$.
	We write~$\Delta^\eta_\infty$ (or shortly~$\Delta_\infty$) for the classical Pseudo-Laplacian~$\Delta^\eta$, hence having a family~$(\Delta^\eta_\gamma)_{\gamma\in\hat{\C}}$ of operators for each~$\eta>p$.
	
	One can show that the family~$(\Delta_\gamma)_{\gamma\in\hat{\C}}$ is a holomorphic family of (unbounded) operators. We refer to \cite{Kato1976}, VII-\S 1.2, for precise definition, as we are mostly interested in the analyticity properties of the (discrete) spectrum of~$(\Delta_\gamma)_\gamma$.
	
	If~$T_z$ is an operator between finite dimensional spaces depending on some complex parameter~$z$, the spectrum of~$T_z$ is identical to the zero set of its characteristic polynomial. 
	Hence the set~$\mathcal{C}=\{ (z,\lambda) \ | \ \lambda  \in \sigma(T_z), z \in U\}$ is a complex variety if the family~$(T_z)_{z\in U}$ is holomorphic on some open set~$U\subset\C$. 
	By separating the spectrum in a specific way, Kato shows that this statement still holds for a holomorphic family of unbounded operators~$(T_z)_z$ if the whole spectrum~$\sigma(T_z)$ in the definition of~$\mathcal{C}$ is replaced by any discrete subset of~$\sigma(T_z)$ (\cite{Kato1976}, VII-\S 1.3).
	
	In particular, all eigenvalues and eigenprojections of~$T_z$ are locally given by branches of analytic functions which means that they are holomorphic except for at most algebraic branch points. Such branch points, however, can only occur in~$T_z$ is not self-adjoint.

	The analyticity of the eigenprojections allows to find local analytic (vector-valued) functions~$z\mapsto v[z]$ of the eigenfunctions of~$T_z$. The construction of such a function is explained for the finite dimensional case and a complete orthonormal family in \cite{Kato1976}, II-\S 6.2, and can be generalised for any single eigenvector of an unbounded operator with the results in \cite{Kato1976}, VII-\S 1.3. 
	Note, however, that it is in general not possible to find such local analytic representation functions uniformly for the set of all eigenvalues and -vectors.

	\section{Uniqueness of Eigenvalues}
	
	As for~$\Delta^\eta$, we call an eigenvalue or eigenfunction~$f$ of~$\Delta_\gamma$ of \textit{type~(I)} if~$f \in L^2_0$ and of \textit{type~(II)} if~$f \in \Theta_\eta$.
	The sequence of type-(I) eigenvalues does not change with~$\gamma$ nor~$\eta$; the type-(II) eigenvalues will always be denoted by 
	\begin{equation}
		\lambda = s \cdot \hat{s}, \qquad  \hat{s}=1-s,  \qquad s \in \C,
	\end{equation}
	the set of type-(II) eigenvalues of~$\Delta_\gamma$ by~$\sigma_2(\Delta_\gamma)$. 
	We write~$\lambda(s)$ when we consider~$\lambda$ as a holomorphic function of~$s$. 
	
	The constant term of a corresponding eigenfunction~$v$ has the form
	\begin{equation}\label{const-term}
		v_0(y) = 
		\begin{cases}
			a y^s + b y^{1-s}, & s \neq \frac{1}{2}, \\
			a \sqrt{y} + b \ln(y) \sqrt{y}, & s = \frac{1}{2},
		\end{cases} 
	\end{equation}
	for complex numbers~$a$ and $b$ not vanishing simultaneously. All other Fourier coefficients~$v_m$, $m\in \Z\backslash\{0\}$, of~$v$ are given by
	$v_m(y) = a_m \sqrt{y} K_{s-\nicefrac{1}{2}}(2\pi|m|y)$ with $a_m \in \C$ and the modified Bessel function~$K_\nu$ of the second kind. 
	
	The invariance of~$\lambda$ under the involution~$z\mapsto \hat{z}$ assigns a certain symmetry of the coefficients~$a=a[s]$ and~$b=b[s]$ in~\eqref{const-term} with respect to~$s$. 
	Hence, every eigenfunction~$v[\lambda]$ of an eigenvalue~$\lambda$ gives two functions~$v[s]$ and~$v[\hat{s}]$ where~$a[s]=b[\hat{s}]$ and~$b[s]=a[\hat{s}]$. 
	The symmetry of~$K_\nu$ with respect to~$\nu$ implies that~$a_m[s]=a_m[\hat{s}]$ for~$m\neq 0$.\\ 
	
	\noindent  
	An argument of energy conservation (see~\cite{Lax1976}, Theorem~8.4, for~$s\neq \frac{1}{2}$, \cite{cdv2}, Theorem~5, for~$s=\frac{1}{2}$) leads to the following uniqueness theorem. 
	
	\begin{theo}\label{uniq}
		Fix~$\lambda\in\C$. Assume that~$w\neq 0$ is a generalised eigenfunction of~$\Delta$, i.\, e. an automorphic function satisfying~$\Delta w = \lambda w$ pointwise on~$\Ha$, such that~$w-w_0 \in L^2(F)$ where~$F$ is a fundamental polygon of~$X$ with cusp at~$\infty$. Then~$w$ is uniquely defined up to normalisation and addition of a cusp form~$f \in L^2_0$. 
	\end{theo}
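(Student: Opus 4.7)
The plan is to reduce the uniqueness statement to a Wronskian-type identity on the horocycle~$\mathcal{H}_\eta$. Write~$\lambda = s \hat s$ with~$\hat s = 1 - s$. In the cusp, the equation~$\Delta w = \lambda w$ separates via the Fourier series~$w(x,y) = \sum_m w_m(y) e(mx)$ into the modal ODEs~$w_m'' + [\lambda/y^2 - 4\pi^2 m^2] w_m = 0$, each of which has a two-dimensional solution space. The hypothesis~$w - w_0 \in L^2(F)$ forces every non-constant mode~$w_m$, $m\neq 0$, to lie in the one-dimensional subspace of~$L^2$-solutions spanned by~$\sqrt{y}\,K_{s-\nicefrac{1}{2}}(2\pi|m|y)$, while the constant term~$w_0$ retains the two free parameters~$(a,b)$ of~\eqref{const-term}.

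Given two such eigenfunctions~$w_1, w_2$, I would apply Green's identity (unbarred) on the truncated surface~$X_\eta$. Since both satisfy~$\Delta w_i = \lambda w_i$ with the same~$\lambda$, the bulk term vanishes; the contributions from the~$\Gamma$-identified edges of the fundamental polygon cancel by automorphy, leaving only the horocycle integral
\begin{equation*}
0 = \int_0^1 \bigl[\,\partial_y w_1 \cdot w_2 - w_1 \cdot \partial_y w_2 \,\bigr]_{y=\eta}\, dx.
\end{equation*}
Using the orthogonality~$\int_0^1 e(mx) e(nx)\,dx = \delta_{m+n,0}$, this integral splits into the sum of modal Wronskians~$W(w_{1,m}, w_{2,-m})\big|_{y=\eta}$. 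For~$m\neq 0$, both~$w_{1,m}$ and~$w_{2,-m}$ lie in the same one-dimensional~$L^2$-solution space, so each such Wronskian vanishes identically. For~$m=0$ a direct computation on the basis~\eqref{const-term} shows that the Wronskian equals a nonzero constant multiple of~$a_1 b_2 - a_2 b_1$ (the factor is~$(1-2s)$ for~$s\neq \frac{1}{2}$ and~$1$ for~$s=\frac{1}{2}$). Consequently~$(a_1, b_1)$ and~$(a_2, b_2)$ are collinear in~$\C^2$.

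Choosing~$c \in \C$ with~$(a_1,b_1) = c(a_2, b_2)$, respectively~$c=0$ if~$(a_2,b_2) = 0$ so that~$w_2$ is already a cusp form, the difference~$f := w_1 - c w_2$ has identically vanishing constant term in the cusp while its non-constant part remains in~$L^2$. Since~$X_\eta$ is compact, $f \in L^2_0 \cap L^2(X)$, and~$\Delta f = \lambda f$ identifies~$f$ as a cusp form (possibly zero). Hence~$w_1 = c w_2 + f$, which is the asserted uniqueness.

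The main technical obstacle is the justification of Green's identity on the non-compact truncated domain: one must check that the edges identified by~$\Gamma$ contribute zero (this is where the automorphy of~$w_i$ and the Fuchsian structure enter) and that the Fourier decomposition of the boundary integral is legitimate even though~$w$ itself is not in~$L^2(X)$. The exceptional case~$s = \frac{1}{2}$ requires a separate Wronskian computation on the basis~$\{\sqrt{y}, \sqrt{y}\ln y\}$, but it produces the same collinearity conclusion and the remainder of the argument goes through unchanged.
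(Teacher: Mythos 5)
Your proof is correct and is essentially the argument the paper relies on: the Green's-identity/Wronskian computation across the horocycle~$\mathcal{H}_\eta$ is precisely the ``energy conservation'' argument that the paper cites from Lax--Phillips (Theorem~8.4) and Colin de Verdi\`ere (Theorem~5) rather than writing out, with the flux through the horocycle reducing to the symplectic pairing $(1-2s)(a_1b_2-a_2b_1)$ on the constant-term data and the $m\neq 0$ modes contributing nothing because both lie on the $K$-Bessel line. Your stated ``main technical obstacle'' is not actually one, since $X_\eta$ is compact, both eigenfunctions are smooth there by elliptic regularity, and the Fourier decomposition of the boundary integral is just Parseval on the compact circle $\{y=\eta\}$; the separate Wronskian computation for $s=\frac{1}{2}$ on the basis $\{\sqrt{y},\sqrt{y}\ln y\}$ is likewise exactly what the cited source does.
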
 
	
	As any eigenfunction~$v \in \Theta_\eta$ of~$\Delta^\eta_\gamma$ with~$\eta>p$ and~$\gamma\in\hat{\C}$ can be continued analytically to~$X$ in a unique way, we have the following corollary.
	
	\begin{coro}\label{uni-thm}
		Let~$\eta>p$ be fixed. For every~$\gamma\in\hat{\C}$ and~$\lambda\in\sigma_2(\Delta_\gamma)$, the eigenspace~$E_{\mathrm{(II),\gamma}}(\lambda)$ of~$\Delta_\gamma$ in~$\Theta_\eta$ associated with~$\lambda$ is one-dimensional. 
	\end{coro}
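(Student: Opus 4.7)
My plan is to derive the corollary directly from \Cref{uniq} via the analytic continuation mentioned in the paragraph preceding the statement. Let $v_1, v_2 \in \Theta_\eta$ be two eigenfunctions of $\Delta_\gamma$ with eigenvalue $\lambda = s(1-s)$. By \eqref{const-term}, the constant terms satisfy $(v_i)_0(y) = a_i y^s + b_i y^{1-s}$ (or the $s = \tfrac{1}{2}$ analogue) on $]p, \eta[$ and vanish on $]\eta, \infty[$, while the non-constant Fourier modes $a_{i,m}\sqrt{y}\, K_{s-1/2}(2\pi|m|y)$ are smooth and square-integrable on all of $]0, \infty[$. Replacing the truncated constant term on $]\eta, \infty[$ by its natural continuation $a_i y^s + b_i y^{1-s}$ produces an automorphic function $\tilde v_i$ on $\Ha$ with $\Delta \tilde v_i = \lambda \tilde v_i$ pointwise, smooth across the horocycle $\mathcal{H}_\eta$, and such that $\tilde v_i - (\tilde v_i)_0 = v_i - (v_i)_0 \in L^2(F)$.

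Both $\tilde v_1$ and $\tilde v_2$ thus satisfy the hypotheses of \Cref{uniq} with the same $\lambda$, so there exist $c \in \C$ and a cusp form $f \in L^2_0$ with $\Delta f = \lambda f$ such that $\tilde v_2 = c\, \tilde v_1 + f$. Truncating the constant term at $\eta$ (which leaves $f$ unaffected, since cusp forms have trivial constant term) yields the identity $v_2 = c\, v_1 + f$ in $L^2_\eta$.

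Finally, because $v_1, v_2 \in \Theta_\eta$, the difference $v_2 - c v_1$ lies in $\Theta_\eta$, while simultaneously $v_2 - c v_1 = f \in L^2_0$. Since $\Theta_\eta$ is by construction the orthogonal complement of $L^2_0$ inside $L^2_\eta$, the intersection $\Theta_\eta \cap L^2_0$ is trivial, forcing $f = 0$ and $v_2 = c\, v_1$. This proves one-dimensionality of $E_{\mathrm{(II),\gamma}}(\lambda)$.

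The main obstacle, modest as it is, lies in justifying the extension $v_i \mapsto \tilde v_i$: one must verify that gluing the original non-constant modes on $]0,\infty[$ with the extended constant term $a_i y^s + b_i y^{1-s}$ produces a function smooth across $\mathcal{H}_\eta$ that solves $\Delta u = \lambda u$ in the classical (pointwise) sense globally, so that \Cref{uniq} really applies. This is precisely what the explicit ODE form of the constant term and the smoothness of the Bessel modes guarantee, making the argument essentially a bookkeeping exercise once the correct continuation has been fixed.
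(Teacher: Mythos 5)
Your argument is exactly the one the paper intends: it continues a type-(II) eigenfunction across the horocycle to a generalised eigenfunction on all of $X$, invokes \Cref{uniq} to conclude $\tilde v_2 = c\,\tilde v_1 + f$ with $f$ a cusp form, and then kills $f$ using $\Theta_\eta \cap L^2_0 = \{0\}$. The paper compresses all of this into the single sentence preceding the corollary, so your proposal is a correct and properly detailed version of the same proof.
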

	
	Given that the Eisenstein series~$E(\, \cdot \, ,s)$ is meromorphic on the whole $s$-plane, \Cref{uni-thm} allows us to immediately write down the spectrum of $\Delta_\gamma$ in a manner similar to \cite{cdv2}, Theorem~5.
	
	Instead, we now assume that we know the Eisenstein series only on its domain of absolute convergence, and re-prove the meromorphic continuability 
	by means of holomorphic perturbation theory.

	\section{Meromorphic Continuation and Spectrum}\label{mercon}
	
	Our next step is to show that there is a one-to-one-correspondence of type-(II) eigenvalues of the set~$\{\Delta^\eta_\gamma \, | \,  \gamma\in\hat{\C} \}$ (for a fixed~$\eta>p$) on one side, and parameters~$s\in\C$ on the other.
	One direction is another immediate consequence of \Cref{uniq}.
	
	\begin{coro}\label{lameind}
		Let~$\eta>p$ be fixed. Then the following hold:
		\begin{enumerate}[(i)]
			\item For every~$\lambda\in\C$, there is at most one~$\gamma\in\hat{\C}$ such that~$\lambda \in \sigma_2(\Delta^\eta_\gamma)$.
			\item An eigenvalue $\lambda\in \sigma_2(\Delta^\eta_\gamma)$ is real if and only if~$\gamma \in \hat{\R}$ if and only if~$s$ and~$\hat{s}$ are both real or lie on the critical line. 
		\end{enumerate} 
	\end{coro}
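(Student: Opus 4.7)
The plan is to prove (i) by combining \Cref{uniq} with a direct analysis of the boundary condition at~$\eta$, and then to deduce (ii) from (i) together with the self-adjointness criterion established in Section~3.

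For (i), I would argue by contradiction: suppose $\lambda \in \sigma_2(\Delta_\gamma) \cap \sigma_2(\Delta_{\gamma'})$ for some $\gamma,\gamma' \in \hat{\C}$. Let $v,v' \in \Theta_\eta$ be corresponding type-(II) eigenfunctions and $w,w'$ their analytic continuations to generalised eigenfunctions on $\Ha$, which exist by the remark preceding \Cref{uni-thm}. Both $w$ and $w'$ satisfy the hypotheses of \Cref{uniq}, so after rescaling they differ only by a cusp form. Since any cusp form has vanishing constant term, the coefficients $a,b$ of the expansion \eqref{const-term} for $w_0$ and $w'_0$ coincide up to a common scalar, hence define the same point $[a:b]$ in~$\mathbb{P}^1(\C)$. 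On the other hand, the boundary condition built into $D(\Delta_\gamma)$ is a single linear equation in $(a,b)$: substituting \eqref{const-term} into the Robin condition $v_0'(\eta) = -\gamma v_0(\eta)$ gives an equation whose coefficients depend affinely on $\gamma$, and for $\gamma=\infty$ it reduces to the Dirichlet condition $a\eta^s + b\eta^{1-s}=0$. Since $(a,b)\neq (0,0)$ (otherwise $w\in L^2_0$, contradicting $v\in\Theta_\eta$), the point $[a:b]$ determines $\gamma\in\hat{\C}$ uniquely, forcing $\gamma=\gamma'$.

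For (ii), one direction is immediate: for $\gamma\in\hat{\R}$ the operator $\Delta_\gamma$ is self-adjoint (for $\gamma\in\R$ by the theorem in Section~3; for $\gamma=\infty$ it is the classical Pseudo-Laplacian~$\Delta^\eta$ from \cite{cdv2}), so its eigenvalues are real. For the converse, I would use the symmetry $\overline{\form_\gamma(v,u)} = \form_{\bar\gamma}(u,v)$ of the sesquilinear form, which yields $\Delta_\gamma^\ast = \Delta_{\bar\gamma}$ and hence $\bar\lambda \in \sigma_2(\Delta_{\bar\gamma})$ whenever $\lambda\in\sigma_2(\Delta_\gamma)$. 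A real $\lambda$ therefore lies simultaneously in $\sigma_2(\Delta_\gamma)$ and $\sigma_2(\Delta_{\bar\gamma})$, so (i) forces $\gamma = \bar\gamma$, i.e.\ $\gamma\in\hat{\R}$. The last equivalence is a direct calculation: writing $s = \sigma + i\tau$ gives $\lambda = (\sigma - \sigma^2 + \tau^2) + i\tau(1-2\sigma)$, so $\lambda\in\R$ exactly when $\tau=0$ or $\sigma=\tfrac{1}{2}$, which under $\hat s = 1-s$ is precisely the stated condition.

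The technical point I expect to require the most care is the degenerate case $s=\tfrac{1}{2}$ in part~(i), where the constant term takes the logarithmic form $a\sqrt{y} + b\sqrt{y}\ln(y)$ from \eqref{const-term}. Here the boundary equation must be obtained by explicit differentiation of the log-term, and one has to verify that the resulting linear condition on $(a,b)$ is still nondegenerate, so that $[a:b]$ continues to determine $\gamma$ uniquely; this should go through by a computation parallel to the one made in \cite{cdv2} for~$\Delta^\eta$ at $s=\tfrac{1}{2}$.
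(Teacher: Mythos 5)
Your argument is correct and follows exactly the route the paper intends: the paper offers no written proof, merely asserting the corollary is an immediate consequence of \Cref{uniq}, and your derivation (uniqueness of the continued eigenfunction up to a cusp form forces the constant term's projective class $[a:b]$, hence the Robin parameter $\gamma=-v_0'(\eta)/v_0(\eta)\in\hat{\C}$, to be determined by $\lambda$; then $\Delta_\gamma^\ast=\Delta_{\bar\gamma}$ plus part (i) gives the reality statement) is precisely the intended fleshing-out. The nondegeneracy you flag at $s=\tfrac{1}{2}$ does hold, since the map $(a,b)\mapsto(v_0(\eta),v_0'(\eta))$ has determinant $1-2s$ for $s\neq\tfrac{1}{2}$ and determinant $1$ in the logarithmic case.
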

	
	It remains to show that for any~$\lambda \in \C$, we find~$\gamma\in\hat{\C}$ such that~$\lambda \in \sigma_2(\Delta^\eta_\gamma)$, which follows from a classical result of complex analysis.
	
	\begin{prop}\label{exis-thm}
		Let~$\eta>p$ be fixed and set~$\mathcal{C}_2 = \{ (\gamma,\lambda) \ | \ \lambda \in \sigma_2(\Delta^\eta_\gamma), \gamma \in \hat{\C} \}$. Then, the projection~$\pi_2: \mathcal{C}_2 \rightarrow \C$, $(\gamma,\lambda) \mapsto \lambda$, to the second component is surjective.
	\end{prop}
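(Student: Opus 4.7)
The plan is to prove surjectivity by showing that the image $S := \pi_2(\mathcal{C}_2) \subseteq \C$ is simultaneously nonempty, open, and closed, whence $S=\C$ by connectedness of the target. Nonemptyness is free: at $\gamma=\infty$, the classical Pseudo-Laplacian $\Delta^\eta$ already carries the infinite sequence of type-(II) eigenvalues $\mu_1<\mu_2<\cdots$ recalled in \Cref{class-plo}, so $\{\mu_j\}_{j\ge1}\subseteq S$.

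For openness, I would fix any $(\gamma_0,\lambda_0)\in\mathcal{C}_2$. By \Cref{uni-thm}, $\lambda_0$ has a one-dimensional eigenspace in $\Theta_\eta$; since $\Theta_\eta$ is invariant under $\Delta^\eta_\gamma$, Kato's theory for the holomorphic family $(\Delta^\eta_\gamma)_{\gamma\in\hat{\C}}$ delivers, via a small Riesz projection, a local analytic branch $\gamma\mapsto\lambda(\gamma)$ of eigenvalues of $\Delta^\eta_\gamma$ with $\lambda(\gamma_0)=\lambda_0$ whose eigenspace stays in $\Theta_\eta$ on a neighbourhood of $\gamma_0$. This branch cannot be constant, for then $\lambda_0\in\sigma_2(\Delta^\eta_\gamma)$ on a whole open set of $\gamma$'s, contradicting \Cref{lameind}(i). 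The classical open mapping theorem for non-constant holomorphic functions then forces $\lambda$ to carry a neighbourhood of $\gamma_0$ onto a neighbourhood of $\lambda_0$ contained in $S$.

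For closedness, take $\lambda_n\in S$ with $\lambda_n\to\lambda$, and choose $\gamma_n\in\hat{\C}$ with unit eigenvectors $v_n\in\Theta_\eta$ satisfying $\Delta^\eta_{\gamma_n} v_n=\lambda_n v_n$. Compactness of $\hat{\C}$ yields a subsequence with $\gamma_n\to\gamma_\infty$. Picking $z$ in the joint resolvent set and using that $(\Delta^\eta_\gamma-z)^{-1}$ depends holomorphically in norm on $\gamma$ and is compact, the identity $v_n=(\lambda_n-z)(\Delta^\eta_{\gamma_n}-z)^{-1} v_n$ forces a strongly convergent subsequence $v_n\to v$ with $\|v\|=1$, $v\in\Theta_\eta$ (by closedness of $\Theta_\eta$), and $\Delta^\eta_{\gamma_\infty} v=\lambda v$. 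Hence $\lambda\in\sigma_2(\Delta^\eta_{\gamma_\infty})\subseteq S$, so $S$ is closed in $\C$.

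The hard part will be the closedness step at the boundary point $\gamma_\infty=\infty$, where the form domain of $\Delta^\eta$ differs from that of the Robin operators at finite $\gamma$; I would address this by working in a local chart of $\hat{\C}$ at $\infty$ (say $\tilde\gamma=1/\gamma$) and invoking the holomorphic family structure stated in the section on Analyticity, so that genuine norm-resolvent convergence $\Delta^\eta_{\gamma_n}\to\Delta^\eta$ is available even as $\gamma_n\to\infty$. With the three properties in hand, $S$ is a nonempty clopen subset of the connected space $\C$, hence $S=\C$ and $\pi_2$ is surjective.
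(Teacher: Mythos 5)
Your proof is correct and reaches the same conclusion, but it assembles the ingredients differently from the paper. The paper's own proof is a two-line application of complex-analytic machinery: $\mathcal{C}_2$ is a complex variety (by Kato's separation of the discrete spectrum), the projection $\pi_2$ is closed because $\hat{\C}$ is compact, so Remmert's proper mapping theorem makes $\pi_2(\mathcal{C}_2)$ an analytic subset of $\C$; since perturbing a single $\Delta_{\gamma_0}$ already puts a non-empty open set into the image, that analytic set can only be all of $\C$. You instead run an elementary clopen argument on the connected target $\C$. The underlying inputs are the same: your openness step is exactly the paper's "perturbation of any operator $\Delta_{\gamma_0}$" (with \Cref{lameind}(i) correctly invoked to rule out a locally constant eigenvalue branch, and with the open mapping theorem surviving algebraic branch points via Puiseux expansions), and your closedness step is precisely the properness of $\pi_2$ that the paper dismisses as following "easily from the compactness of $\hat{\C}$" --- you make explicit the compact-resolvent/norm-resolvent-convergence argument that justifies it, including the delicate case $\gamma_n\to\infty$ where the family is holomorphic only in Kato's generalized sense and not of type (B) (cf.\ \Cref{bem-4.2}). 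What your route buys is the avoidance of Remmert's theorem and the classification of analytic subsets of $\C$; what it costs is that you must establish openness at \emph{every} point of the image, including eigenvalues attained only at $\gamma=\infty$, whereas the paper needs just one open piece. Both versions stand or fall on the same two facts, so the proposal is a sound, more self-contained alternative.
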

	
	\begin{proof}
		As explained above,~$\mathcal{C}_2$ is a complex variety as~$\sigma_2(\Delta^\eta_\gamma)$ is the discrete spectrum of the restriction~$\Delta_\gamma|_{\Theta_\eta}$.
		Now Remmert's proper mapping theorem (\cite{Nara1966}, Theorem~VII-3) states that~$\pi_2(\mathcal{C}_2)$ is an analytic set in~$\C$ (\cite{Nara1966}, Definition~I-1) if~$\pi_2$ is closed, which follows easily from the compactness of~$\hat{\C}$. 
		
		As the only analytic sets in~$\C$ are the empty set and~$\C$ itself, the proof is completed because perturbation of any operator~$\Delta_{\gamma_0}$ provides that~$\pi_2(\mathcal{C}_2)$ contains a non-empty open subset of~$\C$.
	\end{proof}
	
	From now on, we will always assume that we have chosen local analytic functions~$\lambda(\gamma)$ of some type-(II) eigenvalue and the corresponding eigenvector~$v[\gamma]$ of~$\Delta_\gamma$. 
	
	If these functions are holomorphic in~$\gamma_0\in\C$ and~$\lambda$ can be inverted holomorphically, the composition
	\begin{equation}\label{map}
		\Psi: s \mapsto \lambda \mapsto \gamma \mapsto v,
	\end{equation}
	is holomorphic in some neighbourhood~$U$ of~$s_0$ where~$s_0$ is defined by~$\lambda_0=\lambda(s_0)$ with~$\lambda_0 \in \sigma_2(\Delta^\eta_{\gamma_0})$.

	To compute the derivative~$\lambda'(\gamma)$, we make use of the fact that the form~$\form_\gamma$ underlying~$\Delta_\gamma$ is linear with respect to~$\gamma$ and therefore can be written as Taylor series
	\begin{equation*}
		\form_\gamma(u,v) = \form^{(0)} (u,v) + \gamma \cdot \form^{(1)} (u,v)
	\end{equation*}  
	converging everywhere independently of~$u,v \in A_1$. 
	
	\begin{theo}\label{lambda_abl}
		Assume that~$\lambda(\gamma)$ and~$v[\gamma]$ are an analytic type-(II) eigenvalue and eigenvector of~$\Delta_\gamma$, respectively, in a neighbourhood of some~$\gamma_0 \in \C$. Then~$\gamma_0$ is a ramification point of~$\lambda$ if and only if~$(v[\gamma_0], \overline{v[\gamma_0]})_{L^2_\eta}= 0$. Otherwise, the derivative of~$\lambda$ in~$\gamma_0$ is given by
		\begin{equation*}
			\lambda'(\gamma_0) = \frac{v_0[\gamma_0](\eta)^2}{(v[\gamma_0], \overline{v[\gamma_0]})_{L^2_\eta}}.
		\end{equation*}
		In particular, we have~$\lambda'(\gamma)\neq 0$ for all~$\gamma\in\C$.
	\end{theo}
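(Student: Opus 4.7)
The plan is to carry out Kato-style first-order perturbation theory directly at the level of the sesquilinear form $\form_\gamma$. Three ingredients make the argument work: the exact decomposition $\form_\gamma = \form^{(0)} + \gamma\form^{(1)}$ with $\form^{(1)}(u,v) = u_0(\eta)\bar{v}_0(\eta)$ linear in $\gamma$; the identity $\form_\gamma^* = \form_{\bar{\gamma}}$, hence $T_\gamma^* = T_{\bar{\gamma}}$; and the observation that $\overline{v[\gamma_0]}$ is an eigenvector of $T_{\gamma_0}^*$ with eigenvalue $\overline{\lambda(\gamma_0)+1}$. The last point is immediate: conjugating the pointwise eigenequation $\Delta v[\gamma_0] = \lambda(\gamma_0) v[\gamma_0]$ and the Robin condition $v_0'(\eta) = -\gamma_0 v_0(\eta)$ yields exactly the data for $\bar{\gamma}_0$.

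First I would rewrite the eigenequation in the form sense,
$$\form_\gamma(v[\gamma], u) = (\lambda(\gamma)+1)(v[\gamma], u)_{L^2_\eta} \quad \text{for all } u \in A_1,$$
differentiate at $\gamma = \gamma_0$, and test the resulting identity against $u = \overline{v[\gamma_0]}$. The two terms containing the unknown derivative $v'[\gamma_0]$ cancel by adjointness, since $\form_{\gamma_0}(w, \overline{v[\gamma_0]}) = (w, T_{\gamma_0}^*\overline{v[\gamma_0]}) = (\lambda(\gamma_0)+1)(w, \overline{v[\gamma_0]})$ for every $w$ in the form domain. Computing $\form^{(1)}(v,\bar{v}) = v_0(\eta)\,\overline{\bar{v}_0(\eta)} = v_0(\eta)^2$ reduces everything to the single scalar identity
$$v_0[\gamma_0](\eta)^2 = \lambda'(\gamma_0)\,(v[\gamma_0], \overline{v[\gamma_0]})_{L^2_\eta},$$
and division yields the asserted formula whenever the pairing on the right is non-zero.

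For the ramification criterion I argue by contradiction and continuity. If $(v[\gamma_0], \overline{v[\gamma_0]})_{L^2_\eta} = 0$ and $\lambda$ remained holomorphic at $\gamma_0$, the scalar identity would force $v_0[\gamma_0](\eta) = 0$; but for a type-(II) eigenvector this is impossible. Indeed, on $]p,\eta[$ the constant term $v_0$ solves the second-order ODE $-y^2 v_0'' = \lambda v_0$, so $v_0(\eta) = 0$ together with the Robin condition $v_0'(\eta) = -\gamma_0 v_0(\eta) = 0$ and ODE uniqueness force $v_0 \equiv 0$ on $]p,\eta[$; combined with $v_0|_{]\eta,\infty[} = 0$ this places $v$ in $L^2_0 \cap \Theta_\eta = \{0\}$, contradicting that $v$ is an eigenvector. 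Conversely, near a ramification point $\lambda'$ blows up along regular values while $v_0(\eta)^2$ stays bounded and non-zero, so passing to the limit in the identity forces the pairing to vanish. The final assertion $\lambda'(\gamma) \neq 0$ then reduces to the non-vanishing of $v_0(\eta)$ just established.

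The step I expect to be most delicate is the justification of the form-level differentiation: one must confirm that $v'[\gamma_0]$ is a legitimate test element of $A_1$ and that the adjointness cancellation applies even though $v'[\gamma_0]$ need not lie in $D(T_{\gamma_0})$ itself. This should follow from Kato's holomorphy framework for type-(B) families together with the fact that $\overline{v[\gamma_0]} \in D(T_{\gamma_0}^*)$, which is exactly what is needed to represent $\form_{\gamma_0}(\,\cdot\,, \overline{v[\gamma_0]})$ as an inner product on the form domain; but it is where essentially all of the analytic care resides.
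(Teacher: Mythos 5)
Your proposal follows essentially the same route as the paper: the paper's proof likewise differentiates the scalar identity $\form_\gamma(v[\gamma],\overline{v[\gamma]}) = \lambda(\gamma)\,(v[\gamma],\overline{v[\gamma]})_{L^2_\eta}$ and ``compares Taylor coefficients'', which is precisely your adjointness cancellation of the $v'[\gamma_0]$-terms together with $\form^{(1)}(v,\bar v)=v_0(\eta)^2$, and it reduces the ramification criterion to the non-vanishing of $v_0[\gamma](\eta)$ for finite $\gamma$ (the paper cites \Cref{lameind} where you give the ODE-uniqueness argument directly). Your write-up is actually more explicit than the paper's sketch; the one place where both are thin is the converse implication (ramification $\Rightarrow$ vanishing of the pairing), since your claim that $\lambda'$ must blow up at a branch point is not automatic for a general Puiseux expansion, but the paper's proof does not address that direction either.
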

	
	\begin{proof}
		The eigenvalue equation implies 
		\begin{equation*}
			\form_\gamma (v[\gamma], \overline{v[\gamma]}) = \lambda(\gamma) \cdot (v[\gamma], \overline{v[\gamma]})_{L^2_\eta}. 
		\end{equation*}
		If~$\gamma_0$ is no ramification point, both the left and right hand side are holomorphic in~$\gamma_0$. The statement follows from comparing the Taylor coefficients on both sides because \Cref{lameind} provides that~$v_0[\gamma](\eta)=0$ if and only if~$\gamma=\infty$.
	\end{proof}

	\begin{rema}\label{bem-4.2}
		Due to the property of~$\form_\gamma$ having a Taylor series converging for all~$u,v \in A_1$, the family~$(\Delta_\gamma)_{\gamma\in\C}$ belongs to the special type~(B) (\cite{Kato1976}, VII-\S 4.2) as all~$\Delta_\gamma$ have compact resolvent. 
		This provides that the eigenvalues and eigenfunctions of the real-valued family~$(\Delta_\gamma)_{\gamma\in\R}$ can be perturbed in the sense that there are sequences~$(\lambda_n(\gamma))_{n\in\N}$ of eigenvalues and~$(u_n[\gamma])_{n\in\N}$ of eigenvectors of~$\Delta_\gamma$, each of which is holomorphic in a complex neighbourhood~$U_n$ of~$\R$, and such that~$(u_n[\gamma])_{n\in\N}$ is an orthonormal basis of~$L^2_\eta$ for each~$\gamma\in\R$ (\cite{Kato1976}, Remark~VII-4.2).
		
		To give a heuristic argument why the family~$(\Delta_\gamma)_{\gamma\in\hat{\C}}$ cannot be of type~(B) in any neighbourhood of~$\infty$, consider the term
		\begin{equation*}
			\form_{\nicefrac{1}{\gamma}}(u,v) = (u,v)_A + \frac{1}{\gamma} \cdot u_0(\eta) \bar{v}_0(\eta)
		\end{equation*} 
		which diverges for~$\gamma\rightarrow 0$ whenever~$u,v \in A_1 \backslash H_1(X)$. This implies that there is no way to construct~$\Delta_\infty$ based on some form~$\form_\infty$ that is defined on the space~$A_1$.
	\end{rema}
	
	\Cref{lambda_abl} implies that we can apply the inverse function theorem on~$\lambda(\gamma)$ whenever~$\gamma_0$ is neither a ramification point nor the point at infinity. 
	The following lemma provides that it is possible to avoid meeting such points by making a locally suitable choice of~$\eta$ for each~$s_0 \in \C$, so that the function~$\Psi$ in~\eqref{map} is holomorphic in a complex neighbourhood~$U$ of~$s_0$. 
	
	\begin{lemm}\label{choose_eta}
		For every~$\lambda_0\in\C$, there are~$\eta_0 >p$ and~$\gamma_0 \in \C$ such that the type-(II) eigenvalue~$\lambda(\gamma)$ of~$\Delta^{\eta_0}_{\gamma}$ is holomorphic in~$\gamma_0$ with~$\lambda(\gamma_0)=\lambda_0$.
	\end{lemm}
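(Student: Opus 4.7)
The plan is to construct, for a given $\lambda_0 \in \C$, a one-parameter family $(\Delta^\eta_{\gamma(\eta)})_\eta$ all having $\lambda_0$ as a type-(II) eigenvalue, and to show that within this family the bad points (where $\gamma(\eta) = \infty$ or $\lambda(\gamma)$ ramifies) form discrete subsets of the $\eta$-parameter which can therefore be avoided. Concretely, I would first apply \Cref{exis-thm} at some auxiliary $\eta_1 > p$ to produce $\gamma_1 \in \hat{\C}$ and a type-(II) eigenfunction $v$ of $\Delta^{\eta_1}_{\gamma_1}$ for $\lambda_0 = s_0 (1 - s_0)$. By the analytic continuation argument preceding \Cref{uni-thm}, $v$ extends to an automorphic function $w$ on $X$ with $\Delta w = \lambda_0 w$ pointwise and $w - w_0 \in L^2(F)$. \Cref{uniq} fixes $w$, and hence its constant term $w_0(y) = a y^{s_0} + b y^{1 - s_0}$ (or the $s_0 = \tfrac{1}{2}$ analogue of \eqref{const-term}), up to normalisation and addition of a cusp form. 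Since $v \in \Theta_{\eta_1}$ is orthogonal to $L^2_0$, $w$ is not itself a cusp form, so $(a, b) \neq (0, 0)$ and the zero set $Z_w := \{ \eta > p : w_0(\eta) = 0 \}$ is a discrete (generically empty or one-point) subset of $(p, \infty)$.

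Next, for each $\eta \in (p, \infty) \setminus Z_w$ I would set $\gamma(\eta) := -w_0'(\eta) / w_0(\eta) \in \C$ and let $v^\eta$ be the function obtained from $w$ by keeping the non-constant Fourier part and truncating the constant term to $w_0$ on $(p, \eta)$ and $0$ on $(\eta, \infty)$. A form-theoretic computation shows that $v^\eta$ lies in $A_1$, satisfies the Robin condition $v_0'(\eta) = -\gamma(\eta) v_0(\eta)$ by construction, and is an eigenfunction of $\Delta^\eta_{\gamma(\eta)}$ for $\lambda_0$. Splitting the integration along $\mathcal{H}_\eta$ and applying Leibniz's rule, together with $\int_0^1 w(x, \eta)\,dx = w_0(\eta)$, gives
\begin{equation*}
\frac{d}{d\eta} \bigl( v^\eta, \overline{v^\eta} \bigr)_{L^2_\eta}
= \frac{1}{\eta^2} \int_0^1 \bigl[ w^2 - (w - w_0)^2 \bigr]_{y = \eta} \, dx
= \frac{w_0(\eta)^2}{\eta^2}.
\end{equation*}
Hence $\eta \mapsto (v^\eta, \overline{v^\eta})_{L^2_\eta}$ is analytic with non-vanishing derivative on $(p, \infty) \setminus Z_w$, so its zero set $Z_N$ is also discrete. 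Choosing any $\eta_0 \in (p, \infty) \setminus (Z_w \cup Z_N)$, I obtain $\gamma_0 := \gamma(\eta_0) \in \C$; by \Cref{lambda_abl} the nonvanishing of $(v^{\eta_0}, \overline{v^{\eta_0}})_{L^2_\eta}$ rules out ramification, so the branch $\lambda(\gamma)$ of $\Delta^{\eta_0}_\gamma$ with $\lambda(\gamma_0) = \lambda_0$ is holomorphic at $\gamma_0$.

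The main obstacle I anticipate is the form-theoretic bookkeeping in the middle step: identifying the jump constant $C_{v^\eta} = -w_0(\eta)$ entering the definition of $(\,\cdot,\,\cdot)_A$, verifying that $v^\eta \in A_1$ and that $\form_{\gamma(\eta)}(v^\eta, \cdot) = (\lambda_0 + 1)(v^\eta, \cdot)_{L^2_\eta}$ holds identically on $A_1$, and justifying rigorously the interchange of differentiation and integration across the moving horocycle $\mathcal{H}_\eta$. Once these items are settled, the remaining argument is essentially elementary.
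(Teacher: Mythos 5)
Your proposal follows the same skeleton as the paper's proof: both pass to the globally continued generalised eigenfunction $w$ for $\lambda_0$, make $\eta$ the free parameter via the Robin quotient $\gamma(\eta)=-w_0'(\eta)/w_0(\eta)$, and then argue that the bad values of $\eta$ (the zeros of $w_0$, where $\gamma(\eta)=\infty$, together with those $\eta$ producing a ramification point) form an avoidable discrete set. The difference lies in how ramification is excluded. The paper computes $\gamma_s'(\eta)$ in closed form from $w_0(y)=ay^s+by^{1-s}$ and applies the identity theorem, handling $s\in\{\tfrac12,1\}$ separately through \Cref{lameind} and self-adjointness; you instead apply the criterion of \Cref{lambda_abl} directly and show that $\eta\mapsto (v^\eta,\overline{v^\eta})_{L^2_\eta}$ has isolated zeros because its $\eta$-derivative equals $w_0(\eta)^2/\eta^2$ --- a computation that checks out (the boundary terms from Leibniz's rule give $\tfrac{1}{\eta^2}\int_0^1 (2ww_0-w_0^2)\,dx = \tfrac{1}{\eta^2}w_0(\eta)^2$) and is in effect an $\eta$-differentiated form of the Maass--Selberg relation of \Cref{msr}. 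Your route treats $s=\tfrac12$, $s=1$ and generic $s$ uniformly and does not depend on the exact expression for $\gamma_s'$; its cost is that it leans on the assertion, true but also only implicit in the paper, that the truncation $v^\eta$ of $w$ at an arbitrary admissible height is again an eigenfunction of $\Delta^\eta_{\gamma(\eta)}$ --- precisely the form-theoretic bookkeeping you flag. With that verification supplied, the argument is complete.
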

	
	\begin{proof}
		First consider~$s\in\C\backslash\{ \frac{1}{2} \}$ fixed with~$s(1-s) =\lambda_0$.
		Assume that~$\eta>p$,~$\gamma\in\hat{\C}$ and~$v\in L^2_\eta$ are chosen such that~$v \in E_{\mathrm{(II)}, \lambda_0}(\Delta^\eta_\gamma)$
		with constant term~$v_0(y) = a y^s + b y^{1-s}$.
		The Robin equation~\eqref{robin} allows to define~$\gamma$ as a function of~$\eta$ by the quotient
		\begin{equation*}
			\gamma_s(\eta) := -\frac{v_0'(\eta)}{v_0(\eta)}.
		\end{equation*}
		As the zero set~$Z$ of~$v_0$ is discrete, it remains to show that there is~$\eta_0 \notin Z$ such that~$\gamma_0:=\gamma_s(\eta_0)$ is no ramification point~$\lambda(\gamma)$.

		If~$s\neq 1$, this follows from the identity theorem as
		\begin{equation*}
			\gamma'_s(\eta)  = \frac{ a^2 \cdot s(2s - 1) \eta^{2s-2} + b^2 \cdot (1-s)(2s-1) \eta^{-2s} }{ v_0^2[s](\eta) }
		\end{equation*}
		cannot vanish on any neighbourhood of some fixed~$\eta_0>p$.

		If~$s=1$ or~$s=\frac{1}{2}$, \Cref{lameind} implies that~$\gamma_0 \in \hat{\R}$ is no ramification point. As the constant term~$v_0(y) = a \sqrt{y} + b \ln(y) \sqrt{y}$ for~$s=\frac{1}{2}$ has at most one zero~$\eta>0$, the lemma is proven.
	\end{proof}

	\begin{lemm}\label{par-holo}
		If the eigenfunction~$v$ of an type-(II) eigenvalue $\lambda \neq \frac{1}{4}$ depends holomorphically on a parameter~$z$, then the same holds for the coefficients~$a$ and $b$ of the constant term and all coefficients~$a_m$, $m\in\Z\backslash\{0\}$, of the Fourier series
		\begin{equation*}
			v(x+iy) = a y^s + b y^{1-s} + \sum_{m\in\Z\backslash\{0 \}}{ a_m \sqrt{y} K_{s-\nicefrac{1}{2}}(2\pi|m| y) e^{2\pi i m x} }.
		\end{equation*}
	\end{lemm}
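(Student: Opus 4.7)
The plan is to write each desired coefficient as the quotient of two scalar functions of~$z$, both manifestly holomorphic at the given point~$z_0$, by pairing the eigenfunction~$v[z]$ against fixed test functions in~$L^2_\eta$. Because~$\lambda(z) \neq \nicefrac{1}{4}$, the quadratic~$\lambda = s(1-s)$ admits a local holomorphic branch~$s = s(z)$ with~$s(z) \neq \nicefrac{1}{2}$ near~$z_0$; fix such a branch. For~$\chi \in C_c^\infty(]p,\infty[)$ and~$m \in \Z$, set~$w_m(x+iy) := \chi(y)\, e(mx)$, imposing~$\mathrm{supp}\,\chi \subset \, ]p,\eta[$ in the case~$m = 0$ so that~$w_m \in L^2_\eta$. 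The hypothesis gives that~$z \mapsto (v[z], w_m)_{L^2_\eta}$ is holomorphic, and Fubini yields
\begin{equation*}
(v[z], w_m)_{L^2_\eta} \,=\, \int v_m(y)[z]\, \overline{\chi(y)}\, \frac{dy}{y^2}.
\end{equation*}

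The case~$m \neq 0$ is then almost immediate: substituting~$v_m(y)[z] = a_m[z]\, \sqrt{y}\, K_{s(z)-\nicefrac{1}{2}}(2\pi|m|y)$ and pulling the scalar~$a_m[z]$ out of the integral gives
\begin{equation*}
(v[z], w_m)_{L^2_\eta} \,=\, a_m[z] \cdot I_m(\chi,z), \qquad I_m(\chi,z) := \int \sqrt{y}\, K_{s(z)-\nicefrac{1}{2}}(2\pi|m|y)\, \overline{\chi(y)}\, \frac{dy}{y^2}.
\end{equation*}
The factor~$I_m(\chi,z)$ is holomorphic in~$z$ because~$K_\nu(r)$ is entire in~$\nu$ and~$\chi$ has compact support, and~$\chi$ can be chosen so that~$I_m(\chi,z_0) \neq 0$ since the integrand is real-analytic in~$y$ and not identically zero. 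By continuity the denominator stays nonzero in a neighbourhood of~$z_0$, and~$a_m[z] = (v[z],w_m)_{L^2_\eta}/I_m(\chi,z)$ is holomorphic there. For~$m = 0$, the analogous substitution~$v_0(y)[z] = a[z]\, y^{s(z)} + b[z]\, y^{1-s(z)}$ on~$]p,\eta[$ produces the linear system
\begin{equation*}
(v[z], w_0^{(j)})_{L^2_\eta} \,=\, a[z]\, A_j(z) + b[z]\, B_j(z), \qquad j = 1,2,
\end{equation*}
with~$A_j, B_j$ given by the corresponding integrals and both holomorphic in~$z$. Exploiting that~$y^{s(z_0)}$ and~$y^{1-s(z_0)}$ are linearly independent on~$]p,\eta[$ exactly because~$s(z_0) \neq \nicefrac{1}{2}$, one selects~$\chi_1, \chi_2$ making the~$2 \times 2$ matrix $(A_j, B_j)_{j=1,2}$ invertible at~$z_0$; Cramer's rule then expresses~$a[z]$ and~$b[z]$ as quotients of holomorphic functions with nonvanishing denominator near~$z_0$.

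The only nonroutine point is the existence of such test functions; for~$m \neq 0$ this reduces to the nontriviality of a single modified Bessel function on an open interval, and for~$m = 0$ to the linear independence of the two indicial solutions~$y^s, y^{1-s}$, which is precisely where the hypothesis~$\lambda \neq \nicefrac{1}{4}$ enters. Both are settled by elementary analyticity arguments in~$y$, so no serious obstacle is expected.
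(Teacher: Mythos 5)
Your proposal is correct and follows essentially the same route as the paper: both invoke Kato's weak-holomorphy criterion, pair $v[z]$ against test functions depending only on $y$ (times $e(mx)$) to isolate the $m$-th Fourier coefficient, and recover $a$ and $b$ from a $2\times 2$ linear system whose solvability rests on the linear independence of $y^s$ and $y^{1-s}$ for $s \neq \frac{1}{2}$. The only difference is cosmetic: the paper uses indicator functions $\chi_{[y_1,y_2]}$, which forces a separate treatment of $s \in \{0,1\}$ where the explicit antiderivatives degenerate, whereas your smooth test functions avoid that case distinction.
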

	
	\begin{proof}
		By definition (\cite{Kato1976}, VII-\S 1.1), a vector~$v$ is holomorphic in~$z$ if and only if~$z\mapsto (v,f)$ is holomorphic for every~$f\in L^2_\eta$. 
		For~$s\notin \{ 0,1 \}$, $y_1,y_2 \in \, ]p,\eta[$ and~$f=\chi_{[y_1,y_2]}(y)$, we have 
		\begin{equation*}
			(v,f) = a \cdot \frac{y_2^{s-1} - y_1^{s-1}}{s-1} 
			- b \cdot \frac{y_2^{-s} - y_1^{-s}}{s}.
		\end{equation*}
		The same calculation for~$(v,g)$, where~$g=\chi_{[y_3,y_4]}(y)$ with~$y_3,y_4\in \, ]p,\eta[\backslash \{ y_1,y_2 \}$, gives a linear system of equations for~$a$ and~$b$ which has a unique solution for a suitable choice of the~$y_j$.
		
		The proof for~$s\in \{ 0,1 \}$ and~$m\neq 0$ is analogous after suitable choices of~$f$ and~$g$.
	\end{proof}
	
	We are now able to construct a global meromorphic function~$M[s]$ on~$\C\backslash \{ \frac{1}{2} \}$ using the locally defined functions~$\Psi$. 
	
	\begin{theo}
		There exists a unique meromorphic function~$\beta: \C \backslash \{ \frac{1}{2} \} \rightarrow \C$, $s\mapsto \beta(s)$, and a unique family $(M[s](\, \cdot \, ))_{s\in\C\backslash\{ \frac{1}{2} \}}$ of automorphic functions on~$X$ with
		\begin{equation*}
			\Delta M[s](z) = s(1-s) M[s](z) \text { for all } z\in\Ha
		\end{equation*}
		and constant term 
		\begin{equation*}
			M_0[s](y) = y^s + \beta(s) y^{1-s},
		\end{equation*}
		that is meromorphic with respect to~$s$ on~$\C\backslash \{ \frac{1}{2} \}$. The poles of~$M$ are the poles of~$\beta$. 
		These functions are symmetric with respect to complex conjugation:
		\begin{equation*}
			\overline{\beta(s)} = \beta(\bar{s}), \qquad 
			\overline{M(s)}(z) = M(\bar{s})(z) \text{ for all } z \in \Ha,
		\end{equation*}
		and fulfil the functional equations
		\begin{equation*}
			\beta(s) \beta(1-s) =1, \qquad 
			M[1-s](z) = \beta(1-s) M[s](z) \text{ for all } z \in \Ha.
		\end{equation*}
	\end{theo}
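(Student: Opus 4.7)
The plan is to construct $M[s]$ locally by composing the correspondence $s \mapsto \lambda \mapsto \gamma$ with the map $\Psi$ from \eqref{map}, normalise the result, glue local pieces using the uniqueness theorem \Cref{uniq}, and finally read off the symmetries and functional equation.

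For the \emph{local construction}, fix $s_0 \in \C\backslash\{\frac{1}{2}\}$ and set $\lambda_0 = s_0(1-s_0)$. By \Cref{choose_eta} one can choose $\eta_0 > p$ and $\gamma_0 \in \C$ such that a type-(II) eigenvalue $\lambda(\gamma)$ of $\Delta^{\eta_0}_\gamma$ is holomorphic at $\gamma_0$ with $\lambda(\gamma_0) = \lambda_0$. \Cref{lambda_abl} ensures $\lambda'(\gamma_0) \neq 0$, so the inverse function theorem yields a holomorphic $\gamma(\lambda)$ near $\lambda_0$; composing with $s \mapsto s(1-s)$, whose derivative $1-2s$ is nonzero at $s_0 \neq \frac{1}{2}$, furnishes a holomorphic $\gamma(s)$ near $s_0$. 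Via $\Psi$, this produces a holomorphic family of eigenvectors $v[s] \in L^2_{\eta_0}$; \Cref{par-holo} then yields holomorphic coefficients $a[s], b[s]$ of the constant term $v_0[s](y) = a[s] y^s + b[s] y^{1-s}$. Since $v[s]$ solves $\Delta u = \lambda(s) u$ on the interior of $X_{\eta_0}$, it admits a unique analytic continuation to an automorphic eigenfunction $\tilde v[s]$ on $X$. Setting $M_{s_0}[s] := \tilde v[s]/a[s]$ and $\beta_{s_0}(s) := b[s]/a[s]$ gives objects meromorphic near $s_0$, with constant term of $M_{s_0}[s]$ equal to $y^s + \beta_{s_0}(s) y^{1-s}$ as required.

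For the \emph{global gluing}, suppose $M_{s_0}$ and $M_{s_1}$ arise from two such constructions, possibly with different $\eta$. On the overlap of regularity both are automorphic solutions of $\Delta u = \lambda(s) u$ with identical normalised constant term, so $M_{s_0}[s] - M_{s_1}[s] \in L^2(F)$. \Cref{uniq} forces this difference to be a cusp form $f[s]$. Outside the discrete set of $s$ for which $\lambda(s)$ coincides with a cusp-form eigenvalue, no cusp form has eigenvalue $\lambda(s)$, whence $f[s] = 0$; meromorphy extends this identity across the whole overlap. Consequently $\beta$ and $M$ are well-defined globally on $\C\backslash\{\frac{1}{2}\}$, meromorphic, with poles exactly at the zeros of the local normaliser $a[s]$. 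The \emph{symmetries} then follow: for $\gamma \in \R$ the operator $\Delta_\gamma$ is self-adjoint and its eigenfunction may be chosen real, giving $\overline{\beta(s)} = \beta(s)$ and $\overline{M[s]} = M[s]$ on a real interval; the identity theorem extends this to $\overline{\beta(\bar s)} = \beta(s)$ and $\overline{M[\bar s]} = M[s]$ throughout the domain. For the \emph{functional equation}, $M[s]$ and $M[1-s]$ are generalised eigenfunctions for the common eigenvalue $s(1-s)$ with constant terms $y^s + \beta(s) y^{1-s}$ and $y^{1-s} + \beta(1-s) y^s$. By \Cref{uniq}, $M[1-s] - c\,M[s]$ is a cusp form for some scalar $c$; matching constant terms yields $c = \beta(1-s)$ and $\beta(s)\beta(1-s) = 1$, and away from the discrete exceptional set the cusp-form part vanishes, giving $M[1-s] = \beta(1-s) M[s]$ globally by meromorphy.

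The \emph{main obstacle} is the precise control of the gluing step. One must verify that the type-(II) restriction $v[s] \in \Theta_{\eta_j}$ used in the local construction really eliminates the cusp-form ambiguity from \Cref{uniq} after analytic continuation and normalisation; that $a[s]$ is not identically zero in any region (so that $\beta = b/a$ is a genuine meromorphic function, not a limiting $\infty$), which one can argue from the explicit Robin relation $a[s] = -b[s] \eta^{1-2s} \frac{(1-s)/\eta + \gamma(s)}{s/\eta + \gamma(s)}$ and non-constancy of $\gamma(s)$; and that the meromorphic extension across the discrete exceptional set behaves with only polar singularities rather than essential ones, which is furnished by the \Cref{lameind} description of the gamma/eigenvalue correspondence together with the type-(B) perturbation theory invoked in \Cref{bem-4.2}.
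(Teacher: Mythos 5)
Your proposal follows essentially the same route as the paper: local construction of $\Psi[s]$ via \Cref{choose_eta} and \Cref{lambda_abl}, normalisation by $a[s]$ with meromorphy from \Cref{par-holo}, and gluing plus the symmetries and functional equation via \Cref{uniq}. Your treatment of the cusp-form ambiguity in the gluing step and of the non-vanishing of $a[s]$ is more explicit than the paper's, but the argument is the same in substance.
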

	
	\begin{proof}
		Fix~$s_0 \neq \frac{1}{2}$. Let~$\eta>p$ and~$U$ be a neighbourhood of~$s_0$ such that the map~$s\mapsto \Psi[s]$ in~\eqref{map} is holomorphic. 
		Every~$\Psi[s] \in \Theta_\eta$ is an eigenvector of some~$\Delta_\gamma$ and has a unique analytic continuation~$\tilde{\Psi}[s]$ to~$X$. 
		\Cref{par-holo} implies that the normalised function~$s\mapsto M_U[s]:= a[s]^{-1} \tilde{\Psi}[s]$ is meromorphic on~$U$ with a pole if and only if~$a[s]=0$.
		
		The symmetries of~$M_U[s]$ and~$\beta_U(s):=  a[s]^{-1}b[s]$ are implied by the symmetry of any type-(II) eigenfunction with respect to~$s\mapsto\hat{s}$ and the symmetry of~$\form_\gamma$ with respect to~$\gamma$. 
		
		\Cref{uniq} provides that the family of locally defined functions~$(M_U)_{U}$ define a global meromorphic function~$M$ on~$\C\backslash\{ \frac{1}{2} \}$. 
	\end{proof}
	
	\Cref{uniq} also implies that~$M[s]$ must be the unique continuation of the Eisenstein series~$E(\, \cdot \, , s)$ with~$\beta(s)= \ph(s)$. 
	The continuation to the whole~$s$-plane will be completed by the following proposition.
	
	\begin{prop}
		The functions~$\beta(s)$ and~$s\mapsto M[s](\, \cdot \, )$ can be continued holomorphically to~$\frac{1}{2}$ with~$\beta(\frac{1}{2}) \in \{ \pm 1 \}$ and~$M[\frac{1}{2}]=0$ if~$\beta(\frac{1}{2})=-1$.
	\end{prop}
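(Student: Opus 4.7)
The plan is to pull back a holomorphic eigenfunction family near $s=1/2$ via the perturbation framework of \Cref{choose_eta} and \Cref{lambda_abl}, and then read off the extensions of $\beta$ and $M$ from the expansion of the constant term at the distinguished point.

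First, I would apply \Cref{choose_eta} to $\lambda_0=1/4$ to obtain $\eta_0>p$, $\gamma_0\in\C$ for which the type-(II) eigenvalue $\lambda(\gamma)$ of $\Delta^{\eta_0}_\gamma$ is holomorphic at $\gamma_0$ with $\lambda(\gamma_0)=1/4$, together with a holomorphic family $v[\gamma]$ of corresponding eigenvectors. By \Cref{lambda_abl} applied at the interior point $\gamma_0$ (which is neither $\infty$ nor a ramification point), $\lambda'(\gamma_0)\neq 0$, so the inverse function theorem provides a local holomorphic $\gamma=\gamma(\lambda)$ near $1/4$. Composing with $s\mapsto s(1-s)$ produces a function $\gamma(s)$ holomorphic on a neighborhood of $s=1/2$ with $\gamma(1/2)=\gamma_0$, and hence a holomorphic family $w[s]:=v[\gamma(s)]$ of eigenvectors of $\Delta^{\eta_0}_{\gamma(s)}$ with eigenvalue $s(1-s)$; analytic continuation to $X$ keeps $w[s]$ holomorphic in $s$.

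Next I would relate $w[s]$ to $M[s]$ on the punctured neighborhood. For $s\neq 1/2$, \Cref{par-holo} gives the constant term of $w[s]$ in the form $\alpha(s)y^s+\tilde{\beta}(s)y^{1-s}$ with $\alpha,\tilde{\beta}$ holomorphic on the punctured neighborhood. By \Cref{uniq} together with the normalization used to build $M[s]$ in the previous theorem, one has $w[s]=\alpha(s)\,M[s]$ and $\beta(s)=\tilde{\beta}(s)/\alpha(s)$. At $s=1/2$ the constant term of $w[1/2]$ takes the Jordan form $A\sqrt{y}+B\sqrt{y}\ln y$ prescribed in \eqref{const-term}. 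Writing it as $\sqrt{y}\bigl[\alpha(s)y^{t}+\tilde{\beta}(s)y^{-t}\bigr]$ with $t:=s-1/2$, expanding $y^{\pm t}$ as a power series in $t$, and requiring holomorphy at $t=0$ with the correct limit $A+B\ln y$ for every $y$, I would match the coefficients of $1/t$ and $(\ln y)^0,(\ln y)^1$ to force $\alpha_{-1}+\tilde{\beta}_{-1}=0$, $\alpha_{-1}-\tilde{\beta}_{-1}=B$, and $\alpha_0+\tilde{\beta}_0=A$, where $\alpha_{-1},\tilde{\beta}_{-1}$ denote the residues of $\alpha,\tilde{\beta}$ at $s=1/2$. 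In particular, both admit at most simple poles there.

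The conclusion then splits into two cases. If $B=0$, both residues vanish, $\alpha,\tilde{\beta}$ are holomorphic at $1/2$ with $\alpha(1/2)=\tilde{\beta}(1/2)=A/2\neq 0$, hence $\beta$ extends holomorphically with $\beta(1/2)=1$ and $M[s]=w[s]/\alpha(s)$ extends holomorphically with $M[1/2]=2w[1/2]/A\neq 0$. If $B\neq 0$, then $\alpha,\tilde{\beta}$ have nontrivial simple poles at $1/2$ with opposite residues $\pm B/2$, a direct quotient calculation yields $\beta(s)\to -1$ as $s\to 1/2$, and $M[s]=w[s]/\alpha(s)$ extends holomorphically with $M[1/2]=0$ since $w[s]$ remains bounded while $|\alpha(s)|\to\infty$. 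In both cases the identity $\beta(1/2)^2=1$ is consistent with the functional equation $\beta(s)\beta(1-s)=1$. The main obstacle is the delicate case analysis at $s=1/2$: the transition of the constant term to the Jordan form forces the normalizing coefficient $\alpha$ to develop a simple pole precisely when $\beta(1/2)=-1$, which is exactly what compensates to yield $M[1/2]=0$.
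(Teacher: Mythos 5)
Your proposal is correct, and its first half (pulling back a holomorphic eigenvector family through $s=\tfrac12$ via \Cref{choose_eta}, \Cref{lambda_abl} and the inverse function theorem, so that $\gamma(s)=\gamma(\lambda(s))$ is holomorphic at $\tfrac12$) coincides with the paper's argument. Where you diverge is the endgame. The paper expresses $\beta(s)$ as an explicit rational function of $\gamma(s)$, $s$ and $\eta$ coming from the Robin condition, concludes that $\beta$ is meromorphic at $\tfrac12$, and then invokes the functional equation $\beta(s)\beta(1-s)=1$ to see that the singularity is removable with $\beta(\tfrac12)^2=1$. You instead analyse the Laurent behaviour of the normalising coefficients $\alpha,\tilde\beta$ forced by the Jordan-form constant term $A\sqrt{y}+B\sqrt{y}\ln y$ of \eqref{const-term} at $\lambda=\tfrac14$: holomorphy of $w[s]$ at $t=0$ (made rigorous, as in the proof of \Cref{par-holo}, by solving the $2\times 2$ system at two heights, whose determinant $2\sinh(t\ln(y_1/y_2))$ has a simple zero) pins down at most simple poles with opposite residues $\pm B/2$. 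The paper's route is shorter because the functional equation is already available; your route buys more: it identifies \emph{which} sign occurs ($\beta(\tfrac12)=-1$ exactly when the $\ln$-term is present, i.e.\ $B\neq 0$), and it proves the assertion $M[\tfrac12]=0$ in that case directly from $|\alpha(s)|\to\infty$ --- a claim the paper's printed proof does not explicitly address. One small point you leave implicit: in the case $B=0$ you need $\alpha(\tfrac12)\neq 0$ to divide, which requires the symmetry $\alpha(s)=\tilde\beta(\hat{s})$ (valid because $w[s]=w[\hat{s}]$, as $\gamma$ depends only on $\lambda$) to split $\alpha_0+\tilde\beta_0=A\neq 0$ into $\alpha_0=\tilde\beta_0=A/2$; you should state this explicitly.
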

	
	\begin{proof}
		Let~$U$ be a neighbourhood of~$s_0=\frac{1}{2}$ such that~$\beta$ has no poles in~$U\backslash\{ s_0 \}$. For all~$s\in U \backslash\{s_0\}$, the Robin parameter~$\gamma$ is given by the quotient
		\begin{equation*}
			\gamma(s) = - \frac{s \eta^{s-1} + \beta(s) (1-s) \eta^{-s}}{ \eta^s + \beta(s) \eta^{1-s} }
		\end{equation*}
		which defines a meromorphic function on~$U\backslash\{ s_0 \}$ for fixed~$\eta>p$.
		Then~$\beta$ is given by
		\begin{equation}\label{beta}
			\beta(s) = - \frac{\gamma(s) \eta^s + s \eta^{s-1}}{\gamma(s) \eta^{1-s} + (1-s) \eta^{-s}}, \qquad s\in U \backslash \{ s_0 \}.
		\end{equation}
		
		We may assume that~$U$ and~$\eta$ are chosen such that~$V:=\gamma(U) \subset \C$ and that the corresponding eigenvalue function~$\lambda$ is holomorphic on~$V$.
		
		\Cref{lambda_abl} implies that~$\lambda(\gamma)$ is invertible with inverse~$\gamma(\lambda)$ holomorphic on~$W:=\lambda(V)$ and~$\gamma(s) = \gamma(\lambda(s))$ is holomorphic on~$U$.
		So, the right hand side of~\eqref{beta} is meromorphic in~$s_0$. 
		The functional equation of~$\beta$ shows that the singularity in~$s_0$ is removable and~$\beta(s_0)=\pm 1$.
	\end{proof}

	The construction process of~$M[s]$ implies that every type-(II) eigenfunctions of the Robin Pseudo Laplacian~$\Delta_\gamma$ is given by one of the following functions if it fulfils the Robin condition~\eqref{robin}:
	\begin{enumerate}
		\item[(i)] the truncated Eisenstein series~$M^\eta[s]$ if~$s$ is no pole of~$\beta$ or if~$s=\frac{1}{2}$ and~$\beta(\frac{1}{2})=1$, 
		\item[(ii)] the conjugated truncated Eisenstein series $M^\eta[\hat{s}]$ if~$s$ is a pole of~$\beta$ (or the residue of~$M^\eta[s]$ if this pole is simple), 
		\item[(iii)] the derivative~$\partial_s M^\eta[s]|_{s=\nicefrac{1}{2}}$ of the truncated Eisenstein series if~$s=\frac{1}{2}$ and~$\beta(\frac{1}{2})=1$.
	\end{enumerate}
	
	Also, we have an analytic eigenvector map given by~$\gamma \mapsto M[s(\gamma)]$ in a neighbourhood of~$\gamma_0 =\gamma(s_0)$ whenever~$s_0\neq \frac{1}{2}$ and~$s_0$ is no pole of~$\beta$. 
	It can be shown that the formula
	\begin{equation}
		\lambda'(\gamma) = \frac{ M_0[s(\gamma)](\eta)^2 }{ ( M^\eta[s(\gamma)], M^\eta[\overline{s(\gamma)}] )_{L^2_\eta} }
	\end{equation}
	from \Cref{lambda_abl} continues holomorphically also for~$s(\gamma)=\frac{1}{2}$ or~$s(\gamma)$ being a pole, including the limit cases~$\lim_{\gamma\rightarrow\infty}{\lambda'(\gamma)}=0$ and~$\lim_{\gamma\rightarrow \gamma_0}{\lambda'(\gamma)} =\infty$ if~$\gamma_0$ is a ramification point.

	\section{Maass-Selberg Relation and Branch Points}
	
	For fixed~$\eta>p$, consider the function
	\begin{equation*}
		\gamma(s) = - \frac{s \eta^{s-1} + \beta(s) (1-s) \eta^{-s}}{ \eta^s + \beta(s) \eta^{1-s} }.
	\end{equation*}
	
	It is easily seen that $\gamma$ is meromorphic on~$\C$, all poles being simple and given by those~$s$ for which~$\lambda(s) \in \sigma_2(\Delta_\infty)$. In particular, $\gamma$ shares no pole with~$\beta$ as all the poles of~$\beta$ are removable singularities of~$\gamma$.
	For simplicity of notation, write~$P(s):= M'_0[s](\eta)$ and~$Q(s) := M_0[s](\eta)$, so
	\begin{equation*}
		\gamma(s) = -\frac{P(s)}{Q(s)}, \qquad 
		\gamma'(s) = - \frac{P'(s) Q(s) -  Q'(s) P(s)}{Q^2(s)}. 
	\end{equation*}
	
	From \Cref{lambda_abl}, we derived that
	\begin{equation*}
		\lambda'(\gamma) = \frac{Q^2(s(\gamma))}{(M[s(\gamma)], M[\overline{s(\gamma)}])_{L^2_\eta}}
	\end{equation*}
	for all~$\gamma\in\hat{\C}$. On the other hand, we have
	\begin{equation*}
		\lambda'(\gamma) = \lambda'(s) \cdot s'(\gamma) = \lambda'(s) \cdot (\gamma'(s))^{-1}.
	\end{equation*}
	
	This leads to the following formula which is a special case of the Maass-Selberg relation.
	
	\begin{prop}\label{msr}
		For all $s \in \C$ and $\eta>p$, we have 
		\begin{equation*}
			(M^\eta[s], M^\eta[\bar{s}])_{L^2_\eta} = \frac{\partial_s M'_0[s](\eta) \cdot M_0[s](\eta) - M'_0[s](\eta) \cdot \partial_s M_0[s](\eta)}{2s-1}. 
		\end{equation*}
	\end{prop}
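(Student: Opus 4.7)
The plan is to derive the stated identity by equating two different expressions for the derivative $\lambda'(\gamma)$ along the curve $\gamma\mapsto\lambda(\gamma)$, and then extend the resulting equality to all $s\in\C$ by a standard meromorphy argument.

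First I would fix $s_0 \in \C$ in the open set where the construction of the previous section applies (so $s_0 \neq \frac{1}{2}$, $s_0$ is not a pole of $\beta$, $Q(s_0) \neq 0$, and $\gamma_0 := \gamma(s_0)$ is not a ramification point of $\lambda$), and work in a small neighbourhood. On one hand, \Cref{lambda_abl} gives
\begin{equation*}
 \lambda'(\gamma_0) \;=\; \frac{M_0[s_0](\eta)^2}{(M^\eta[s_0],\,M^\eta[\bar{s}_0])_{L^2_\eta}} \;=\; \frac{Q(s_0)^2}{(M^\eta[s_0],\,M^\eta[\bar{s}_0])_{L^2_\eta}}.
\end{equation*}
On the other hand, since $\lambda(s)=s(1-s)$ gives $\lambda'(s) = 1-2s$, the chain rule yields
\begin{equation*}
 \lambda'(\gamma_0) \;=\; \frac{\lambda'(s_0)}{\gamma'(s_0)} \;=\; \frac{-(2s_0-1)\,Q(s_0)^2}{-\bigl(P'(s_0)\,Q(s_0) - Q'(s_0)\,P(s_0)\bigr)},
\end{equation*}
using the formula for $\gamma'(s)$ displayed just before the proposition.

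Setting the two expressions equal and cancelling the common factor $Q(s_0)^2$ (which is nonzero by assumption) gives
\begin{equation*}
 (M^\eta[s_0],\,M^\eta[\bar{s}_0])_{L^2_\eta} \;=\; \frac{P'(s_0)\,Q(s_0) - Q'(s_0)\,P(s_0)}{2s_0-1},
\end{equation*}
which is exactly the claimed identity for this generic $s_0$.

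Finally, I would argue that the equality extends to all $s\in\C$ by the identity theorem. The right-hand side is manifestly meromorphic in $s$ on all of $\C$ because $M_0[s](\eta)$ and $M'_0[s](\eta)$ are, by the previous section, meromorphic functions of $s$ (with any singularities coming only from poles of $\beta$). For the left-hand side, \Cref{par-holo} together with the construction of $M^\eta[s]$ shows that $s\mapsto M^\eta[s]$ is meromorphic with values in $L^2_\eta$, and a suitable normalisation makes $s\mapsto (M^\eta[s], M^\eta[\bar s])_{L^2_\eta}$ meromorphic as well. Since the two sides coincide on the nonempty open set of admissible $s_0$, they coincide everywhere; the possible pole at $s=\frac{1}{2}$ on the right is removable because the numerator vanishes there thanks to $\beta(\frac{1}{2})=\pm 1$ and the symmetry $M_0[1-s] = \beta(1-s) M_0[s]$. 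The main subtlety is therefore not the computation itself but checking that both sides are genuinely meromorphic in $s$ and that the apparent singularities (at $s=\tfrac12$, at zeros of $Q$, and at poles of $\beta$) really cancel so that the formula is valid for \emph{all} $s\in\C$ as stated.
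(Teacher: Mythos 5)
Your proposal is correct and follows essentially the same route as the paper: the paper also obtains the identity by equating the formula for $\lambda'(\gamma)$ from \Cref{lambda_abl} with the chain-rule expression $\lambda'(s)\cdot(\gamma'(s))^{-1}$ and cancelling $Q^2$. Your additional care in extending the identity from generic $s_0$ to all of $\C$ by meromorphy is a detail the paper leaves implicit, but it is not a different method.
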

	
	In particular, \Cref{msr} implies that the set of type-(II) eigenvalues defined by the Eisenstein series forms an orthonormal basis~$(u_n[\gamma])_{n\in\N}$ in the sense of \Cref{bem-4.2} for the restriction of~$\Delta_\gamma$ to~$\Theta_\eta$.

	The observations above enable us also to describe the occurrence of branch points. 
	
	\begin{prop}
		Let~$\eta>p$ be fixed. Then, the following statements are equivalent:
		\begin{enumerate}[(i)]
			\item $\gamma_0$ is a ramification point of order $n\geq 2$ of the local eigenvalue function $\lambda$ with $\lambda(\gamma_0)=\lambda_0 = \lambda(s_0)$, 
			\item $\gamma^{(k)}(s_0)=0$ for $k=1,\ldots, n-1$ and $s_0 \neq \frac{1}{2}$,
			\item $\lambda_0 \in \sigma_2(\Delta_{\gamma_0})$ has algebraic multiplicity~$n$, and the generalised eigenspace of rank $2 \leq k \leq n$ is spanned by the derivatives $(\partial_s)^j M^\eta[s]$, $j=1,\ldots, k$,  of the truncated Eisenstein series,
			\item the truncated Eisenstein series~$M^\eta[s_0]$ and~$M^\eta[\bar{s}_0]$ are orthogonal in $L^2_\eta$.
		\end{enumerate}
	\end{prop}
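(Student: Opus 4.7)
The plan is to factor everything through the meromorphic function $s\mapsto\gamma(s)=-P(s)/Q(s)$ introduced above and its local inverse $s(\gamma)$: all four conditions translate into statements about the order of vanishing of $\gamma(s)-\gamma_0$ at $s_0$. For (i) $\Leftrightarrow$ (ii) I write $\lambda(\gamma)=\lambda(s(\gamma))$ with $\lambda(s)=s(1-s)$. Since $\lambda'(s_0)=1-2s_0\neq 0$ whenever $s_0\neq\frac{1}{2}$, the map $\lambda(s)$ is locally biholomorphic, and a Puiseux expansion of $s(\gamma)$ shows that the ramification order of $\lambda(\gamma)$ at $\gamma_0$ equals the vanishing order of $\gamma(s)-\gamma_0$ at $s_0$. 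The side condition $s_0\neq\frac{1}{2}$ must be added separately: at the critical point $s_0=\frac{1}{2}$ of $\lambda$, a square-root branch of $s(\gamma)$ would be cancelled by $\lambda$, so $\lambda(\gamma)$ cannot ramify of order $\geq 2$ there.

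For (ii) $\Leftrightarrow$ (iv) I substitute $\gamma'(s)=(PQ'-P'Q)/Q^2$ into the Maass--Selberg formula of \Cref{msr}:
\[
(M^\eta[s],M^\eta[\bar s])_{L^2_\eta}=-\frac{Q(s)^2\,\gamma'(s)}{2s-1}.
\]
Since $Q(s_0)=M_0[s_0](\eta)\neq 0$ ($\gamma_0\in\C$ being finite) and $s_0\neq\frac{1}{2}$, the inner product vanishes exactly when $\gamma'(s_0)=0$. Combined with (i) $\Leftrightarrow$ (ii), this identifies (iv) with the minimal case $n=2$ of (ii), i.e.\ with ``$\gamma_0$ is a ramification point of $\lambda$ of some order $\geq 2$''.

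For (ii) $\Leftrightarrow$ (iii) I differentiate both the form identity $\form_{\gamma(s)}(M^\eta[s],w)=(\lambda(s)+1)(M^\eta[s],w)$ (for $w\in A_1$) and the Robin identity $P(s)+\gamma(s)Q(s)=0$ repeatedly in $s$. Leibniz applied to the Robin identity under the hypothesis $\gamma^{(k)}(s_0)=0$ for $1\leq k\leq n-1$ shows that every $\partial_s^j M^\eta[s_0]$ with $j\leq n-1$ still satisfies the Robin condition with parameter $\gamma_0$, hence lies in $D(\Delta_{\gamma_0})$, while the analogous expansion of the form identity picks up precisely the terms $\gamma^{(k)}(s_0)$ that we have arranged to vanish. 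An induction on $j$ then yields the Jordan relations
\[
(\Delta_{\gamma_0}-\lambda_0)^{j+1}\partial_s^j M^\eta[s_0]=0,\qquad (\Delta_{\gamma_0}-\lambda_0)^j\partial_s^j M^\eta[s_0]=j!\,\lambda'(s_0)^j\,M^\eta[s_0]\neq 0,
\]
producing a Jordan chain of length $n$ and hence algebraic multiplicity at least $n$. For the opposite inequality I invoke Kato's theorem for families of type (B) (\Cref{bem-4.2}), which identifies the total algebraic multiplicity at a branch point with the sum of the ramification orders of the colliding Puiseux branches; \Cref{uni-thm} ensures that only one branch collides at $\lambda_0$, since the geometric eigenspace is one-dimensional, so the sum equals $n$ exactly. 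The main obstacle is precisely this last step---ruling out several short Puiseux cycles that would produce the same total multiplicity without a single ramification of order $n$---and it rests critically on the one-dimensionality from \Cref{uni-thm}; the secondary technical point that the $s$-derivatives of $M^\eta[s]$ are genuine $L^2_\eta$-vectors is handled by the meromorphy of $s\mapsto M^\eta[s]$ combined with \Cref{par-holo}.
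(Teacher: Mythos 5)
Your proof is correct and follows essentially the same route as the paper: the paper's own (very terse) proof only writes out the differentiation identity $\Delta(\partial_s M[s])=\lambda'(s)M[s]+\lambda(s)\,\partial_s M[s]$ for part (iii) and leaves the remaining equivalences to the immediately preceding observations --- the chain rule $\lambda'(\gamma)=\lambda'(s)/\gamma'(s)$, the Maass--Selberg relation of \Cref{msr}, and the ramification criterion of \Cref{lambda_abl} --- which are exactly the ingredients you use for (i)$\Leftrightarrow$(ii)$\Leftrightarrow$(iv) and for the Jordan chain. One small repair: to exclude $s_0=\tfrac12$ in (i)$\Rightarrow$(ii), the reliable argument is the one the paper uses in the proof of \Cref{choose_eta} (the real eigenvalue $\lambda_0=\tfrac14$ forces $\gamma_0\in\hat{\R}$ by \Cref{lameind}, where $\Delta_{\gamma_0}$ is self-adjoint and hence unramified), rather than your cancellation heuristic, which does not by itself rule out a critical point of $\gamma$ viewed as a function of $\lambda$ at $\lambda_0=\tfrac14$.
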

	
	\begin{proof}
		The second part of (iii) follows from repeating the argument
		\begin{equation*}
			\Delta (\partial_s M[s]) = \partial_s (\Delta M[s]) = \partial_s (\lambda(s) M[s]) = \lambda'(s) \cdot M[s] + \lambda(s) \cdot \partial_s M[s] 
		\end{equation*}
		for the higher derivatives of $s\mapsto M[s]$.
	\end{proof}

\end{document}